 \newcommand {\C} {{\mathbb C}}
 \newcommand {\E} {{\mathcal E}}
\newcommand{\db}{\bar\partial}
\DeclareMathOperator*{\im}{im}
 \newtheorem{thm}[subsection]{Theorem}
 \newtheorem{cor}[subsection]{Corollary}
 \newtheorem{lemma}[subsection]{Lemma}
 \newtheorem{ex}[subsection]{Example}
\begin{document}

 \title{ Lefschetz decompositions for eigenforms on a K\"ahler manifold}

 \author{Donu Arapura}\thanks{Partially supported by the
   NSF} 
 \address{Department of Mathematics\\
   Purdue University\\
   West Lafayette, IN 47907\\
   U.S.A.}  

\maketitle

\begin{abstract}
  We show that the eigenspaces of  the  Laplacian $\Delta_k$ on $k$-forms on a
  compact K\"ahler manifold carry Hodge and Lefschetz
  decompositions. Among other consequences,  we show that the positive
  part of the spectrum of $\Delta_k$ lies in the spectrum of
  $\Delta_{k+1}$ for $k<\dim X$.
\end{abstract}

Given a compact Riemannian manifold $X$ without boundary,
let $\Delta_k$ denote the Laplacian on $k$-forms and $\lambda_1^{(k)}$ its smallest positive
eigenvalue.
We can ask how these numbers vary with $k$.
By differentiating eigenfunctions, we easily see that $\lambda_{1}^{(0)}\ge \lambda_{1}^{(1)}$.
For $k>1$, the situation is more complicated:  Takahashi \cite{tak2}  has shown that  the sign of
$\lambda_{1}^{(k)}-\lambda_{1}^{(0)}$ can be arbitrary for compact
Riemannian manifolds. More generally,  Guerini and  Savo \cite{g,gs}
have constructed examples, where the sequence
 $\lambda_1^{(2)},\lambda_1^{(3)},\ldots \lambda_1^{([\dim X/2])}$ can do just
about anything. The goal of this note is to show that  when
$X$ is   compact K\"ahler, the eigenspaces carry extra structure, and that
this imposes strong constraints on the eigenvalues
and their multiplicities. For instance, we show that the eigenvalues of
$\Delta_k$ occur
with even multiplicities when $k$ is odd. We also show that the positive part of the spectrum of $\Delta_k$
  is contained in the spectrum  of $\Delta_{k+1}$
 for all $k<\dim_\C X$. Therefore the sequence $\lambda_1^{(0)},\ldots,
 \lambda_1^{(\dim_\C X)}$ is  weakly decreasing. 

After this paper was submitted, it was brought to my attention that Jakobson, Strohmaier, 
 and Zelditch \cite{jsz} have also studied the spectra of K\"ahler
 manifolds, although for rather different reasons.

\section{Main theorems}

For the remainder of  this paper,
$X$ will denote a compact K\"ahler manifold of complex dimension $n$,
with   K\"ahler form $\omega$. Let $\Delta= d^*d+dd^*$ be the
Laplacian on  complex valued forms $\E^*$.
Standard arguments in  Hodge theory guarantee that  the spectrum of $\Delta$ is
discrete,
and the  eigenspaces
$$\E^*_\lambda= \{\alpha \in \E^*\mid \Delta\alpha=\lambda\alpha\}$$
are finite dimensional. Since $\Delta$ is positive and self adjoint, the
eigenvalues are nonnegative real.
We let $\E^k_\lambda$ and $\E^{(p,q)}_\lambda$ denote the intersection
of $\E^*_\lambda$ with the space of $k$ forms and $(p,q)$-forms
respectively. 

The proofs  of the following statements will naturally  hinge on the K\"ahler identities \cite{gh,
  wells}, which we recall below. 
We have
$$\Delta =
2(\partial\partial^*+\partial^*\partial)=2(\db\db^*+\db^*\db)$$
which implies that it commutes 
with the projections $\pi^{pq},\pi^k:\E^*\to \E^{(p,q)},\E^k$.
The Laplacian $\Delta$  also commutes  with the Lefschetz operator
$L(-)=\omega\wedge-$ and its adjoint $\Lambda$. 
An additional set of identities implies that $L,\Lambda$ and $H=\sum (n-k)\pi^k$ together determine an
action of the Lie algebra $sl_2(\C)$ on $\E^*$.

\begin{thm}\label{thm:1}
For each $\lambda$, there is a Hodge decomposition
\begin{equation}
  \label{eq:1}
 \E^k_\lambda= \bigoplus_{p+q=k}\E^{(p,q)}_\lambda 
\end{equation}
\begin{equation}
  \label{eq:2}
\overline{ \E_\lambda^{(p,q)}}
=\E_\lambda^{(q,p)}
\end{equation}
If $i>0$,  there is a hard Lefschetz isomorphism
\begin{equation}
  \label{eq:3}
\omega^i\wedge:\E^{n-i}_\lambda\stackrel{\sim}{\longrightarrow}\E^{n+i}_\lambda  
\end{equation}

\end{thm}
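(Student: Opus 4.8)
The plan is to deduce all three statements from the structural facts recalled just above: that $\Delta$ commutes with the projections $\pi^{pq}$, with complex conjugation, and with the $sl_2(\C)$-triple $(L,\Lambda,H)$. Granting those, \eqref{eq:1} and \eqref{eq:2} are formal. For \eqref{eq:1}, since $[\Delta,\pi^{pq}]=0$ each $\pi^{pq}$ preserves the eigenspace $\E^*_\lambda$, so restricting the identity $\sum_{p+q=k}\pi^{pq}=\mathrm{id}$ to $\E^k_\lambda$ gives $\E^k_\lambda=\bigoplus_{p+q=k}\pi^{pq}\E^k_\lambda$; and $\pi^{pq}\E^k_\lambda=\E^{(p,q)}_\lambda$ because $\pi^{pq}\E^k_\lambda\subseteq\E^{(p,q)}\cap\E^*_\lambda$ while any $\alpha\in\E^{(p,q)}_\lambda$ is fixed by $\pi^{pq}$ and lies in $\E^k_\lambda$. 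For \eqref{eq:2}, the operator $\Delta=d^*d+dd^*$ has real coefficients, hence commutes with the conjugation $\alpha\mapsto\bar\alpha$; as the $\lambda$ are real, conjugation maps $\E^*_\lambda$ to itself, and since it interchanges $(p,q)$- and $(q,p)$-forms it carries $\E^{(p,q)}_\lambda$ isomorphically onto $\E^{(q,p)}_\lambda$.

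The substance is in \eqref{eq:3}. Because $\Delta$ commutes with $L$, $\Lambda$, and $H$, the finite-dimensional space $\E^*_\lambda$ is stable under all three, hence is a finite-dimensional representation of $sl_2(\C)$. By the definition $H=\sum(n-k)\pi^k$, the operator $H$ acts on $\E^{n-i}_\lambda$ as the scalar $i$ and on $\E^{n+i}_\lambda$ as $-i$, so these are precisely the weight-$i$ and weight-$(-i)$ subspaces of $\E^*_\lambda$. Now I would invoke the elementary structure theory of $sl_2(\C)$-modules: in any finite-dimensional representation the $i$-th power of the lowering operator restricts to an isomorphism from the weight-$i$ subspace onto the weight-$(-i)$ subspace. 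This is checked by decomposing into irreducibles $V_m$ and verifying it on each summand, where it is the familiar statement about the string of weight spaces $m, m-2,\ldots,-m$. Since $L$ raises form-degree by $2$ it lowers the $H$-eigenvalue by $2$, so $L$ plays the role of the lowering operator; applying the lemma with exponent $i$ gives the hard Lefschetz isomorphism $\omega^i\wedge:\E^{n-i}_\lambda\stackrel{\sim}{\longrightarrow}\E^{n+i}_\lambda$.

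I do not expect a serious obstacle here: the three Kähler identities quoted before the theorem do all of the geometric work, and what remains is bookkeeping together with the standard $sl_2$-module fact. The only points deserving a little care are, first, confirming that $\E^*_\lambda$ is genuinely closed under $L$ and $\Lambda$ — equivalently that $[\Delta,L]=[\Delta,\Lambda]=0$ on a compact Kähler manifold, which is exactly what is recalled above — and second, getting the sign convention on the $H$-weights right so that $L$ rather than $\Lambda$ is identified as the lowering operator, since otherwise one would be asserting the isomorphism in the wrong direction.
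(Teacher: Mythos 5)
Your proposal is correct and follows essentially the same route as the paper: (1) and (2) from the commutation of $\Delta$ with $\pi^{pq}$ and with complex conjugation, and (3) by observing that $\E^*_\lambda$ is an $sl_2(\C)$-submodule and applying the standard weight-space isomorphism $L^i\colon (\text{weight }i)\stackrel{\sim}{\to}(\text{weight }{-i})$. You simply spell out the bookkeeping that the paper leaves implicit.
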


\begin{proof}
\eqref{eq:1} follows from the fact that
 $\Delta$ commutes  with $\pi^{pq}$. Since $\Delta$ and $\lambda$ are real, we obtain
\eqref{eq:2}. 
The proof of \eqref{eq:3}  is identical to the usual proof  of the hard
Lefschetz theorem \cite[pp 118-122]{gh}.
The key point is  that by representation theory, $L^i$ maps
$V\cap \E^{n-i}$ isomorphically to $V\cap \E^{n+i}$ for any
$sl_2(\C)$-submodule $V\subset \E^*$.
Applying this to the  subspace $V=\E_\lambda$, which  is an
$sl_2(\C)$-submodule because $L,\Lambda,H$ commute with $\Delta$,
proves (3).
\end{proof}

The first part of the theorem can be rephrased as saying that $\E_\lambda^k$ is a real Hodge structure of weight $k$.
We define the multiplicities $h^{pq}_\lambda= \dim \E^{(p,q)}_\lambda$
and $b^i_{\lambda}= \dim \E^i_\lambda$. When $\lambda=0$, these are
the usual Hodge and Betti numbers. In general, they depend on the metric.
These numbers share many properties of ordinary   Hodge
and Betti numbers:

\begin{cor}\label{cor:1}
For each $\lambda$,
  \begin{enumerate}
  \item[(a)] $b^k_{\lambda} =\sum_{p+q=k} h^{pq}_\lambda$
\item[(b)] $h^{pq}_\lambda= h^{qp}_\lambda$
\item[(c)] $b^k_{\lambda}$ is even if $k$ is odd.
\item[(d)] $b^{2n-k}_{\lambda}=b^k_{\lambda}$.
\item[(e)] if $k<n$,  $b^k_{\lambda}\le b^{k+2}_{\lambda}$
\item[(f)] if $p+q<n$, $h^{pq}_\lambda= h^{n-p,n-q}_\lambda$.
\item[(g)]  if $p+q<n$, $h^{pq}_\lambda\le h^{p+1,q+1}_\lambda$
  \end{enumerate}

\end{cor}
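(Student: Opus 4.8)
The plan is to derive all seven parts from Theorem~\ref{thm:1} together with elementary facts about finite-dimensional $sl_2(\C)$-modules, essentially repeating the classical arguments for the case $\lambda=0$.

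First, parts (a) and (b) are immediate: (a) just reads $b^k_\lambda$ off the decomposition \eqref{eq:1}, and (b) follows from \eqref{eq:2} because complex conjugation is a conjugate-linear isomorphism $\E^{(p,q)}_\lambda\to\E^{(q,p)}_\lambda$ and hence preserves complex dimension. Part (c) is then purely formal: when $k$ is odd, (a) groups the $h^{pq}_\lambda$ with $p+q=k$ into pairs $\{(p,q),(q,p)\}$ with no unpaired term (since $p\neq q$), so $b^k_\lambda=2\sum_{p<q,\,p+q=k}h^{pq}_\lambda$ using (b). Part (d) follows from \eqref{eq:3}: for $k\leq n$ take $i=n-k$ (the case $i=0$ being trivial), giving $\E^k_\lambda\cong\E^{2n-k}_\lambda$, and the case $k>n$ is the same with $k$ and $2n-k$ interchanged.

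For (e)--(g) I would first establish the one extra ingredient: that the hard Lefschetz isomorphism \eqref{eq:3} is compatible with the Hodge decomposition \eqref{eq:1}. Indeed $L=\omega\wedge$ carries $(a,b)$-forms to $(a+1,b+1)$-forms, so $\omega^i\wedge$ maps $\E^{(a,b)}_\lambda$ into $\E^{(a+i,b+i)}_\lambda$, and since $(a,b)\mapsto(a+i,b+i)$ is a bijection from $\{a+b=n-i\}$ onto $\{c+d=n+i\}$, the isomorphism \eqref{eq:3} is block-diagonal with square blocks; hence each block $\omega^i\wedge\colon\E^{(a,b)}_\lambda\to\E^{(a+i,b+i)}_\lambda$ is itself an isomorphism. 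Granting this, (f) is immediate: with $p+q<n$ and $i=n-p-q>0$ we get $\E^{(p,q)}_\lambda\cong\E^{(n-q,n-p)}_\lambda$, so $h^{pq}_\lambda=h^{n-q,n-p}_\lambda$, which equals $h^{n-p,n-q}_\lambda$ by (b). For (g), under the same hypotheses factor this block isomorphism as $\E^{(p,q)}_\lambda\xrightarrow{\,L\,}\E^{(p+1,q+1)}_\lambda\xrightarrow{\,\omega^{i-1}\wedge\,}\E^{(n-q,n-p)}_\lambda$; the first arrow is then injective, so $h^{pq}_\lambda\leq h^{p+1,q+1}_\lambda$. Summing these inequalities over all $(p,q)$ with $p+q=k<n$ and invoking (a) yields (e).

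I do not anticipate a genuine obstacle: once Theorem~\ref{thm:1} is available the corollary is largely bookkeeping. The only step asking for a moment's care is the compatibility of \eqref{eq:3} with the Hodge decomposition used in (f) and (g) --- specifically the observation that the relevant shift of bidegrees is a bijection of index sets, so the blocks of the isomorphism are square and the total isomorphism forces each block to be invertible. As an alternative to the factorization argument in (e) and (g), one could instead note that $\E_\lambda$ is a finite-dimensional $sl_2(\C)$-module and appeal to its primitive (Lefschetz) decomposition, which makes the injectivity of $L$ below the middle degree transparent; but factoring through \eqref{eq:3} is shorter and avoids setting that machinery up.
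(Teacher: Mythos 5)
Your proof is correct and follows essentially the same route as the paper: (a)--(d) read off directly from Theorem~\ref{thm:1}, and (e)--(g) come from the hard Lefschetz isomorphism \eqref{eq:3} together with the observation that $\omega^i\wedge-$ shifts the bidegree by $(i,i)$, so that injectivity of each graded piece (and of $\omega\wedge$ itself, by factoring $\omega^i=\omega^{i-1}\circ\omega$) gives the inequalities. The only cosmetic difference is that you obtain (e) by summing the bigraded inequalities of (g), whereas the paper deduces it directly from the injectivity of $\omega\wedge\colon\E^k_\lambda\to\E^{k+2}_\lambda$; these are the same argument.
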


\begin{proof}
  The first four statements are immediate. For (e) we use the
  fact that $\omega\wedge\colon \E^k_\lambda\to \E^{k+2}_\lambda$ is an
  injection by \eqref{eq:3}.
 For (f) and (g), we use \eqref{eq:3}, and observe that $\omega^i\wedge -$ shifts the bigrading
  by $(i,i)$.
\end{proof}

The above results can be visualized in terms of the geometry of the ``Hodge diamond''.
When $\lambda>0$, there are some new patterns as well. We start with a
warm up.

\begin{lemma}
  If $\lambda$ is a positive eigenvalue of $\Delta_0$, then
  $h^{0,1}_\lambda\ge h^{0,0}_\lambda= b^0_\lambda$ and
  $b^1_\lambda\ge 2b^0_\lambda$.
\end{lemma}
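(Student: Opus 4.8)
The plan is to exploit the fact that the exterior derivative $d$ commutes with the Laplacian, so it maps $\E^0_\lambda$ into $\E^1_\lambda$, together with the Hodge decomposition of Theorem~\ref{thm:1}. First I would observe that for a positive eigenvalue $\lambda$, the map $d\colon \E^0_\lambda \to \E^1_\lambda$ is injective: if $df = 0$ then $\Delta f = d^*df + dd^*f = 0$ (since $f$ is a function, $d^*f=0$ as well), forcing $\lambda f = 0$ and hence $f=0$. So $b^1_\lambda \ge b^0_\lambda$. That alone is not enough; the point is to gain the factor of $2$ from the Hodge decomposition.

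The key step is to locate the image of $d$ inside the Hodge pieces. Since $\E^0_\lambda = \E^{(0,0)}_\lambda$ consists of functions, $d = \partial + \db$ and we get an injection $\db\colon \E^{(0,0)}_\lambda \to \E^{(0,1)}_\lambda$: indeed $\Delta = 2(\db^*\db + \db\db^*)$ on functions reduces to $2\db^*\db$, so $\db f = 0$ would again give $\lambda f = 0$. This already yields $h^{0,1}_\lambda \ge h^{0,0}_\lambda = b^0_\lambda$. Then by conjugation symmetry \eqref{eq:2}, $h^{1,0}_\lambda = h^{0,1}_\lambda \ge b^0_\lambda$, and by the Hodge decomposition (a) of Corollary~\ref{cor:1}, $b^1_\lambda = h^{1,0}_\lambda + h^{0,1}_\lambda \ge 2b^0_\lambda$.

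I expect the only subtlety to be the reduction $\Delta = 2\db^*\db$ on functions, i.e. checking that $\db\db^* f = 0$ for a function $f$; this is immediate because $\db^* f$ is a section of $\Lambda^{-1}$ and hence zero for degree reasons, but it is worth stating cleanly since it is what makes $\db$ injective on $\E^{(0,0)}_\lambda$. No deeper machinery (hard Lefschetz, $sl_2$) is needed here — this lemma is purely a warm-up using that $d$, $\partial$, $\db$ all commute with $\Delta$ and preserve eigenspaces, combined with the Hodge symmetry. The main obstacle, such as it is, is simply being careful that the eigenvalue being positive is used exactly where injectivity is claimed.
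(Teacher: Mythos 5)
Your proof is correct and follows essentially the same route as the paper: show $\db\colon\E^{(0,0)}_\lambda\to\E^{(0,1)}_\lambda$ is injective, deduce $h^{0,1}_\lambda\ge h^{0,0}_\lambda=b^0_\lambda$, then double via Hodge symmetry $h^{1,0}_\lambda=h^{0,1}_\lambda$. The only cosmetic difference is in the injectivity step, where the paper observes that a function in $\ker\db$ is a global holomorphic function, hence constant, hence killed by $\lambda>0$, whereas you compute directly from $\Delta=2\db^*\db$ on functions; both are valid one-line arguments.
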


\begin{proof}
The map $\db:\E_\lambda^0\to
  \E_\lambda^{0,1}$ is injective, because the kernel consists
  of  global holomorphic eigenfunctions which are necessarily
  constant and therefore $0$. This implies the first inequality, which in turn
  implies the second.
\end{proof}

We will give an extension to higher degrees, but first we start with a lemma.

\begin{lemma}\label{lemma:thm2}
  If $\lambda>0$,
$$\E_\lambda^k=d\E_\lambda^{k-1}\oplus d^*\E_\lambda^{k+1}$$
and
$$\E_\lambda^{(p,q)}=\db \E_\lambda^{(p,q-1)}\oplus
\db^*\E_\lambda^{(p,q+1)}$$
\end{lemma}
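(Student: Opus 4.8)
The plan is to use the standard Hodge-theoretic Kodaira decomposition together with the fact that $\E_\lambda$ is an invariant subspace. Recall that on the full space of forms, the Hodge decomposition says $\E^k = \HH^k \oplus d\E^{k-1} \oplus d^*\E^{k+1}$, where $\HH^k$ is the space of harmonic forms. More precisely, since $\Delta$ is self-adjoint with discrete spectrum, the whole space of smooth forms decomposes as a Hilbert-space direct sum of the eigenspaces $\E^k_\mu$ over all eigenvalues $\mu \ge 0$, and each $\E^k_\mu$ is finite-dimensional and preserved by $d$, $d^*$ (since these commute with $\Delta$), and by $\partial, \partial^*, \db, \db^*$ as well. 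So the first step is to restrict attention to a fixed positive eigenvalue $\lambda$ and work entirely inside the finite-dimensional space $\E_\lambda = \bigoplus_k \E^k_\lambda$, on which $d, d^*$ act as honest linear operators with $d^2 = 0$, $(d^*)^2 = 0$, and $dd^* + d^*d = \Delta = \lambda \cdot \mathrm{id}$.

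The key algebraic observation is this: on $\E_\lambda$ with $\lambda > 0$, the operator $G = \lambda^{-1}\,\mathrm{id}$ is a two-sided inverse to $\Delta = dd^* + d^*d$, so every $\alpha \in \E^k_\lambda$ can be written $\alpha = \lambda^{-1}(dd^* + d^*d)\alpha = d(\lambda^{-1}d^*\alpha) + d^*(\lambda^{-1}d\alpha)$, exhibiting $\alpha \in d\E^{k-1}_\lambda + d^*\E^{k+1}_\lambda$. For directness of the sum, I note that $d\E^{k-1}_\lambda \perp d^*\E^{k+1}_\lambda$ with respect to the $L^2$ inner product: if $\beta = d\gamma = d^*\delta$ with $\gamma \in \E^{k-1}_\lambda$, $\delta \in \E^{k+1}_\lambda$, then $\langle \beta, \beta\rangle = \langle d\gamma, d^*\delta\rangle = \langle d^2\gamma, \delta\rangle = 0$, so $\beta = 0$. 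This gives $\E^k_\lambda = d\E^{k-1}_\lambda \oplus d^*\E^{k+1}_\lambda$. The $(p,q)$ statement follows by the identical argument using the K\"ahler identity $\Delta = 2(\db\db^* + \db^*\db)$: on $\E^{(p,q)}_\lambda$ one has $\alpha = \frac{2}{\lambda}(\db\db^* + \db^*\db)\alpha = \db(\tfrac{2}{\lambda}\db^*\alpha) + \db^*(\tfrac{2}{\lambda}\db\alpha)$, and $\db\E^{(p,q-1)}_\lambda \perp \db^*\E^{(p,q+1)}_\lambda$ by the same $\db^2 = 0$ argument; one also uses that $\db$ and $\db^*$ preserve the eigenspaces since they commute with $\Delta$ (which follows from the K\"ahler identities) and that they shift bidegree by $(0,1)$ and $(0,-1)$ respectively.

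I do not anticipate a serious obstacle here; this is essentially the classical Hodge decomposition argument carried out one eigenvalue at a time, with the harmonic summand absent precisely because $\lambda > 0$ makes $\Delta$ invertible on $\E_\lambda$. The one point requiring a word of care is that all these identities — in particular that $d\E^{k-1}_\lambda$ genuinely lands in $\E^k_\lambda$ — rely on $d, d^*, \partial, \db$ and their adjoints commuting with $\Delta$; for $d$ and $d^*$ this is the definition of $\Delta$, and for the Dolbeault operators it is a consequence of the K\"ahler identities already invoked in the proof of Theorem~\ref{thm:1}. The orthogonality of the two summands uses only $d^2 = 0$ (resp. $\db^2 = 0$), not K\"ahlerness, so the real-coefficient statement would hold on any compact Riemannian manifold; the bidegree refinement is where the K\"ahler hypothesis enters, through the compatibility of $\Delta$ with the bigrading.
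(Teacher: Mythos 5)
Your proof is correct, but it takes a different route from the paper's. The paper obtains the lemma by combining two global facts: the Kodaira--Hodge decomposition $\E^k = \E^k_0 \oplus d\E^{k-1} \oplus d^*\E^{k+1}$ and the spectral decomposition $\E^{k\pm1} = \bigoplus_i \E^{k\pm1}_{\lambda_i}$, then matching up the $\lambda$-graded pieces using $d\E^{k-1}_\lambda, d^*\E^{k+1}_\lambda \subset \E^k_\lambda$. You instead work entirely inside the single eigenspace and exploit the fact that $\Delta$ acts there as $\lambda\cdot\mathrm{id}$, which is invertible for $\lambda>0$: the identity $\alpha = d(\lambda^{-1}d^*\alpha) + d^*(\lambda^{-1}d\alpha)$ gives the spanning statement directly, and orthogonality of the two summands follows from $\langle d\gamma, d^*\delta\rangle = \langle d^2\gamma,\delta\rangle = 0$. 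Your argument is more self-contained --- it does not invoke the full global Kodaira decomposition, only the facts that $d, d^*, \db, \db^*$ commute with $\Delta$ and hence preserve eigenspaces, plus $d^2 = (\db)^2 = 0$ --- and it makes transparent why the harmonic summand is absent (the ``Green's operator'' on $\E_\lambda$ is just $\lambda^{-1}$). The paper's version is shorter on the page because it reuses machinery already cited, but it leaves implicit the bookkeeping that the cross-terms $d\E^{k-1}_\mu$ for $\mu\neq\lambda$ contribute nothing to $\E^k_\lambda$. Your closing observations --- that the first decomposition needs no K\"ahler hypothesis while the $(p,q)$ refinement does --- are accurate and worth retaining.
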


\begin{proof}
  By standard Hodge theory \cite{gh, wells},  we have the decompositions
$$\E^k = \E_0\oplus d\E^{k-1}\oplus d^*\E^{k+1}$$
$$\E^{k\pm 1}=\bigoplus_{i=1}^\infty
\E^{k\pm 1}_{\lambda_i^{(k\pm 1)}}$$
These can be combined to yield  the decomposition 
$$\E^k = \E^k_0\oplus \bigoplus_{i=1}^\infty
d\E^{k-1}_{\lambda_i^{(k-1)}}\oplus \bigoplus_{i=1}^\infty
d^*\E^{k+1}_{\lambda_i^{(k+1)}}$$
Since $d\E^{k-1}_\lambda,d^*\E^{k+1}_\lambda\subset \E^k_\lambda$, the
first part of the lemma
$$\E_\lambda^k=d\E_\lambda^{k-1}\oplus d^*\E_\lambda^{k+1}$$
follows immediately.
The proof of the second part is identical.
\end{proof}

\begin{thm}\label{thm:2}
 Suppose that $\lambda>0$.
\begin{enumerate}
\item[(a)] For all $k$, $b^k_\lambda\le b^{k-1}_\lambda+b^{k+1}_\lambda$

\item[(b)] If $p+q<n$, then $h^{pq}_\lambda\le h^{p+1,q}_\lambda+h^{p,q+1}_\lambda$.
\item[(c)]  If  $k<n$, then $b^k_{\lambda}\le b^{k+1}_{\lambda}$
\end{enumerate}

\end{thm}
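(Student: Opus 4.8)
The plan is to settle (a) and (c) quickly and then concentrate on (b). Statement (a) is immediate from Lemma~\ref{lemma:thm2}: in $\E_\lambda^k=d\E_\lambda^{k-1}\oplus d^*\E_\lambda^{k+1}$ the first summand is the image of $d\colon\E_\lambda^{k-1}\to\E_\lambda^k$ and the second the image of $d^*\colon\E_\lambda^{k+1}\to\E_\lambda^k$, so $b^k_\lambda\le b^{k-1}_\lambda+b^{k+1}_\lambda$. For (c) I would first note that, since $\lambda>0$, every $d$-closed form in $\E_\lambda^\bullet$ is $d$-exact ($d\alpha=0$ gives $\alpha=\lambda^{-1}dd^*\alpha$), so $(\E_\lambda^\bullet,d)$ is an exact complex; writing $r_j=\dim d\E_\lambda^j$, exactness gives $b^k_\lambda=r_{k-1}+r_k$, hence $b^{k+1}_\lambda-b^k_\lambda=r_{k+1}-r_{k-1}$, and it suffices to prove $r_{k-1}\le r_{k+1}$ for $k<n$. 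Because $d\omega=0$ forces $\partial\omega=\db\omega=0$, the operator $L=\omega\wedge-$ commutes with $d$; and by \eqref{eq:3} the iterate $L^{n-k}$ is injective on $\E_\lambda^k$ when $k<n$, so $L$ itself is. Since $L$ carries $d\E_\lambda^{k-1}\subset\E_\lambda^k$ into $d\E_\lambda^{k+1}$, injectivity gives $r_{k-1}\le r_{k+1}$, proving (c).

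For (b) the map to use is $d\colon\E_\lambda^{(p,q)}\to\E_\lambda^{(p+1,q)}\oplus\E_\lambda^{(p,q+1)}$, $\alpha\mapsto(\partial\alpha,\db\alpha)$. Because $\partial\db+\db\partial=0$, its image lies in the kernel of the linear map
\[
\Phi\colon\partial\E_\lambda^{(p,q)}\oplus\db\E_\lambda^{(p,q)}\longrightarrow\E_\lambda^{(p+1,q+1)},\qquad
\Phi(\beta,\gamma)=\db\beta+\partial\gamma .
\]
Since $\dim\partial\E_\lambda^{(p,q)}\le h^{p+1,q}_\lambda$ and $\dim\db\E_\lambda^{(p,q)}\le h^{p,q+1}_\lambda$, comparing ranks and kernels of $d|_{\E_\lambda^{(p,q)}}$ and $\Phi$ reduces (b) to the single inequality $\dim\im\Phi\ge\dim\ker(d|_{\E_\lambda^{(p,q)}})$. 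I would obtain this from three observations. First, an eigenspace version of the $\partial\db$-lemma: for $\lambda>0$, a $d$-closed form in $\E_\lambda^{(p,q)}$ lies in $\partial\db\E_\lambda^{(p-1,q-1)}$, so $\ker(d|_{\E_\lambda^{(p,q)}})=\partial\db\E_\lambda^{(p-1,q-1)}$ and, shifting bidegree, $\ker(d|_{\E_\lambda^{(p+1,q+1)}})=\partial\db\E_\lambda^{(p,q)}$. Second, $\Phi$ restricted to $\partial\E_\lambda^{(p,q)}\oplus 0$ is $\beta\mapsto\db\beta$, so $\im\Phi\supseteq\db(\partial\E_\lambda^{(p,q)})=\partial\db\E_\lambda^{(p,q)}=\ker(d|_{\E_\lambda^{(p+1,q+1)}})$. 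Third, $L$ commutes with $d$ and is injective on $\E_\lambda^{(p,q)}$ for $p+q<n$ (again by \eqref{eq:3}), so it embeds $\ker(d|_{\E_\lambda^{(p,q)}})$ into $\ker(d|_{\E_\lambda^{(p+1,q+1)}})$. Combining, $\dim\ker(d|_{\E_\lambda^{(p,q)}})\le\dim\ker(d|_{\E_\lambda^{(p+1,q+1)}})\le\dim\im\Phi$, which is what was needed; hence $h^{pq}_\lambda\le h^{p+1,q}_\lambda+h^{p,q+1}_\lambda$.

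The main obstacle is the eigenspace $\partial\db$-lemma of the first observation. Ordinary Hodge theory together with the classical $\partial\db$-lemma only give that a $d$-closed $(p,q)$-form in $\E_\lambda$ is $\partial\db$ of \emph{some} form, so one must check the potential can be chosen inside $\E_\lambda^{(p-1,q-1)}$. I would do this by hand from the K\"ahler identities: if $\beta\in\E_\lambda^{(p,q)}$ is $d$-closed, then $\partial\beta=0$ forces $\partial(\db^*\beta)=0$ (using $\partial\db^*=-\db^*\partial$), so $\db^*\beta=2\lambda^{-1}\partial\partial^*\db^*\beta$, and therefore $\beta=2\lambda^{-1}\db\db^*\beta=-4\lambda^{-2}\partial\db(\partial^*\db^*\beta)$ with $\partial^*\db^*\beta\in\E_\lambda^{(p-1,q-1)}$. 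Apart from this the only inputs beyond the standard identities are ordinary Hodge theory and the hard Lefschetz isomorphism \eqref{eq:3}; it is worth noting that the hypotheses $p+q<n$ in (b) and $k<n$ in (c) are used in exactly the same way, namely as the range in which the relevant power of $L$ is injective. (Alternatively one can package (b) via the decomposition of $\E_\lambda^{(p,q)}$ into its $\partial$- and $\db$-exact and co-exact parts, with Hodge star interchanging the two ``corner'' pieces, but the route above seems the most economical.)
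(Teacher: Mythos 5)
Your argument is correct, but for parts (b) and (c) it takes a genuinely different route from the paper. The paper works with the orthogonal decomposition of Lemma \ref{lemma:thm2}, splitting $\E_\lambda^{(p,q)}$ into its $\db$-exact and $\db$-coexact pieces, and produces two explicit injections: $\db$ from the coexact piece in bidegree $(p,q)$ into the exact piece in $(p,q+1)$, and $\alpha\mapsto\db^*(\omega\wedge\alpha)$ from the exact piece into the coexact piece in $(p+1,q)$ (this is where $p+q<n$ enters). Adding the two resulting inequalities gives (b), and --- crucially --- the refined, piecewise version of (b) is what lets the paper sum along the antidiagonal to get (c) without double counting. Your route instead exploits that for $\lambda>0$ the complex $(\E_\lambda^\bullet,d)$ is exact: for (c) this gives $b^k_\lambda=r_{k-1}+r_k$ with $r_j=\dim d\E_\lambda^j$, and $L=\omega\wedge-$ commuting with $d$ and being injective below the middle dimension yields $r_{k-1}\le r_{k+1}$; this is arguably cleaner than the paper's summation and makes (c) independent of (b). For (b) you replace the exact/coexact bookkeeping by rank--nullity for $\alpha\mapsto(\partial\alpha,\db\alpha)$ together with the operator $\Phi(\beta,\gamma)=\db\beta+\partial\gamma$, and the kernel defect is controlled by your eigenspace $\partial\db$-lemma plus Lefschetz injectivity of $L$ on $d$-kernels. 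I checked the key computations: $\partial\db^*=-\db^*\partial$ does give $\db^*\beta=\tfrac{2}{\lambda}\partial\partial^*\db^*\beta$ for $d$-closed $\beta\in\E_\lambda^{(p,q)}$, hence $\beta=-\tfrac{4}{\lambda^2}\partial\db(\partial^*\db^*\beta)$ with potential inside $\E_\lambda^{(p-1,q-1)}$, so the eigenspace $\partial\db$-lemma is sound, and the chain $\dim\ker(d|_{\E_\lambda^{(p,q)}})\le\dim\ker(d|_{\E_\lambda^{(p+1,q+1)}})\le\dim\im\Phi$ closes the estimate. Both proofs ultimately use the same two inputs --- the K\"ahler identities and the injectivity of powers of $L$ on $\E_\lambda$ below the middle degree --- but your packaging isolates a statement of independent interest (the $\partial\db$-lemma holds eigenspace by eigenspace), whereas the paper's yields the sharper bidegree-refined inequality \eqref{eq:8} as a byproduct.
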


\begin{proof}
  The first statement is an immediate consequence of lemma \ref{lemma:thm2}.

By lemma \ref{lemma:thm2}, we have a  direct sum 
$\E_\lambda^{(p,q)}= \E_{\im \db,\lambda}^{(p,q)}\oplus \E_{\im \db^*,\lambda}^{(p,q)}$ of the $\db$-exact
$\E_{\im\db,\lambda}^{(p,q)}:=\db \E_\lambda^{(p,q-1)} $ and $\db$-coexact
$\E_{\im \db^*,\lambda}^{(p,q)}:=\db^*\E_\lambda^{(p,q+1)} $ parts. We denote the dimensions of these spaces by
$h^{pq}_{\im \db,\lambda}$ and $h^{pq}_{\im \db^*,\lambda}$ respectively.

  Suppose that $\alpha\in \E_{\im \db^*,\lambda}^{(p,q)}$ then we can
  write $\alpha=\db^*\beta$.
We have
$\db\alpha\in
  \E^{(p,q+1)}_\lambda$ because $\db$ and $\Delta$ commute. Suppose that
  $\db\alpha=0$. Then 
   $$\alpha=\frac{1}{\lambda}\Delta\alpha =\frac{2}{\lambda}(\db^*\db+\db\db^*)\alpha
=\frac{2}{\lambda}\db\db^*\alpha =\frac{2}{\lambda}\db(\db^*)^2\beta$$
This is zero, because $\langle (\db^*)^2\beta,\xi\rangle =\langle
\beta,\db^2\xi\rangle=0$ for any $\xi$.
Thus the map 
$$\db: \E_{\im \db^*,\lambda}^{(p,q)}\hookrightarrow
\E_{\im \db,\lambda}^{(p,q+1)}$$
is injective. Although, we will not need it, it  is worth noting that
it also surjective because 
$$\E_{\im \db,\lambda}^{(p,q+1)}=\db(\E_{\im \db,\lambda}^{(p,q)}\oplus \E_{\im \db^*,\lambda}^{(p,q)})=\db \E_{\im \db^*,\lambda}^{(p,q)}$$ 
Therefore
\begin{equation}
  \label{eq:6}
 h_{\im \db^*,\lambda}^{p,q}= h_{\im \db,\lambda}^{p,q+1}
\end{equation}
for all $p,q$. 
We now assume that  $p+q<n$.  We will also establish an inequality 
\begin{equation}
  \label{eq:7}
   h_{\im \db,\lambda}^{p,q}\le  h_{\im \db^*\lambda}^{p+1,q}
\end{equation}
Let
$\alpha=\db\beta\in \E_{\im \db,\lambda}^{(p,q)}$ be a nonzero
element. The previous theorem shows that $\gamma=\omega\wedge\alpha$
is a nonzero element of $\E_\lambda$.
The form $\db^*\gamma\not=0$, since otherwise
$$\gamma=\frac{1}{\lambda}\Delta\gamma= \frac{2}{\lambda}\db^*\db^2(\omega\wedge \beta)=0$$
Thus we have proved that the map
$$
\E_{\im \db,\lambda}^{(p,q)}\to \E_{\im \db^*,\lambda}^{(p+1,q)}
$$
given by $\alpha\mapsto \db^*(\omega\wedge \alpha)$ is
injective. Equation \eqref{eq:7} is an immediate consequence.
Adding  \eqref{eq:6} and \eqref{eq:7} yields
\begin{equation}
  \label{eq:8}
  h^{pq}_\lambda \le  h_{\im \db,\lambda}^{p,q+1}+ h_{\im \db^*\lambda}^{p+1,q}
\end{equation}
which implies (b). Equation \eqref{eq:8} also implies
\begin{eqnarray*}
b^k_\lambda  &=& h^{0,k}_\lambda+h^{1,k-1}_\lambda+\ldots \\
 &\le& (h^{0,k+1}_{\im \db, \lambda} + h^{1,k}_{\im \db^*,\lambda})+(
 h^{1,k}_{\im \db, \lambda}+ h^{2,k-1}_{\im \db^*,\lambda} )+ \ldots\\
&=& h^{0,k+1}_{\im \db, \lambda} + (h^{1,k}_{\im \db^*,\lambda}+
 h^{1,k}_{\im \db, \lambda})+ \ldots\\
&\le& h^{0,k+1}_{\lambda} + h^{1,k}_{\lambda}+ \ldots\\
&= & b^{k+1}_\lambda
\end{eqnarray*}

\end{proof}

By the positive spectrum of an operator,
we mean the set of its positive eigenvalues (considered without multiplicity).

\begin{cor}
 The positive spectrum of $\Delta_{k}$ is
  contained in the union of the spectra of $\Delta_{k-1}$ and
  $\Delta_{k+1}$.
\end{cor}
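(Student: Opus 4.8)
The corollary asserts that if $\lambda > 0$ lies in the spectrum of $\Delta_k$ — equivalently $b^k_\lambda > 0$ — then $\lambda$ lies in the spectrum of $\Delta_{k-1}$ or $\Delta_{k+1}$, i.e.\ $b^{k-1}_\lambda > 0$ or $b^{k+1}_\lambda > 0$. I would deduce this immediately from Theorem \ref{thm:2}(a), which gives $b^k_\lambda \le b^{k-1}_\lambda + b^{k+1}_\lambda$: if the right-hand side vanishes then $b^k_\lambda = 0$, so $\lambda$ was not in the spectrum of $\Delta_k$ after all. Contrapositively, $b^k_\lambda > 0$ forces $b^{k-1}_\lambda > 0$ or $b^{k+1}_\lambda > 0$, which is exactly the containment claimed.

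One small point to address: Theorem \ref{thm:2}(a) is stated for all $k$, but when $k=0$ one should note that $\E^{-1}_\lambda = 0$, so the inequality reads $b^0_\lambda \le b^1_\lambda$ and the conclusion is that the positive spectrum of $\Delta_0$ is contained in that of $\Delta_1$ — consistent with the statement since $\Delta_{-1}$ contributes nothing. Similarly at the top degree $k = 2n$ one uses $\E^{2n+1}_\lambda = 0$. No obstacle arises here; the content is entirely in Theorem \ref{thm:2}(a), which in turn rests on Lemma \ref{lemma:thm2}.

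There is essentially no hard part: the proof is a one-line logical rephrasing of an inequality already proved. If I wanted to be thorough I would simply write: \emph{Suppose $\lambda$ is a positive eigenvalue of $\Delta_k$, so $b^k_\lambda > 0$. By Theorem \ref{thm:2}(a), $b^{k-1}_\lambda + b^{k+1}_\lambda \ge b^k_\lambda > 0$, hence at least one of $b^{k-1}_\lambda$, $b^{k+1}_\lambda$ is positive, i.e.\ $\lambda$ belongs to the spectrum of $\Delta_{k-1}$ or of $\Delta_{k+1}$.} The only thing worth double-checking is the boundary degrees, handled as above.
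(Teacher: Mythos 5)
Your argument is correct and is exactly the intended one: the paper gives no separate proof of this corollary, treating it as an immediate consequence of Theorem \ref{thm:2}(a) (equivalently of Lemma \ref{lemma:thm2}, which already exhibits $\E^k_\lambda$ as $d\E^{k-1}_\lambda\oplus d^*\E^{k+1}_\lambda$). Your remark about the boundary degrees is a sensible bit of tidiness but changes nothing.
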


Let $\lambda_1^{(k)}$ 
 denote the first strictly positive eigenvalue of
$\Delta_k=\Delta|_{\E^k}$.  

\begin{cor}
If $k<n$, the positive spectrum  of $\Delta_k$ is contained
in the positive spectrum of $\Delta_{k+1}$. Consequently,
$\lambda_1^{(0)}\ge \lambda_1^{(1)}\ge\ldots \ge\lambda_1^{(n)}$.
\end{cor}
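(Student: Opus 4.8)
The plan is to read this corollary off directly from Theorem~\ref{thm:2}(c). First I would record the translation between spectra and the multiplicities $b^k_\lambda$: a positive real number $\lambda$ lies in the positive spectrum of $\Delta_k$ precisely when $b^k_\lambda=\dim\E^k_\lambda\ge 1$. Since $X$ is compact, the space $\E^k$ of smooth $k$-forms is infinite dimensional while the harmonic space $\E^k_0$ is finite dimensional, so the positive spectrum of each $\Delta_k$ is nonempty; together with the discreteness of the spectrum noted at the start of the section, this guarantees that $\lambda_1^{(k)}$ is well defined and is genuinely attained as the minimum of the positive spectrum of $\Delta_k$.

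Next, for $k<n$ and any $\lambda>0$ with $b^k_\lambda\ge 1$, Theorem~\ref{thm:2}(c) gives $b^{k+1}_\lambda\ge b^k_\lambda\ge 1$, so $\lambda$ also belongs to the positive spectrum of $\Delta_{k+1}$. This is exactly the claimed containment. For the chain of inequalities I would use the elementary fact that if $S\subseteq T$ are nonempty subsets of the positive reals, each having a minimum, then $\min T\le\min S$; applying this with $S$ the positive spectrum of $\Delta_k$ and $T$ that of $\Delta_{k+1}$ yields $\lambda_1^{(k+1)}\le\lambda_1^{(k)}$ for every $k$ with $0\le k<n$, and concatenating these gives $\lambda_1^{(0)}\ge\lambda_1^{(1)}\ge\cdots\ge\lambda_1^{(n)}$.

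I do not expect a real obstacle here: all of the analytic input (Lemma~\ref{lemma:thm2}) and the representation-theoretic input via hard Lefschetz (Theorem~\ref{thm:1}, Theorem~\ref{thm:2}) has already been supplied, and the corollary is essentially a one-line bookkeeping consequence. The only points that deserve a moment's care are confirming that the positive spectrum is nonempty so that $\lambda_1^{(k)}$ makes sense, and noting that the hypothesis $k<n$ in Theorem~\ref{thm:2}(c) is precisely what is needed for the final link $\lambda_1^{(n-1)}\ge\lambda_1^{(n)}$; no statement is made, or available by this argument, about $\lambda_1^{(n)}$ versus $\lambda_1^{(n+1)}$, and indeed the symmetry $b^{2n-k}_\lambda=b^k_\lambda$ from Corollary~\ref{cor:1}(d) shows that nothing new would be gained past the middle dimension.
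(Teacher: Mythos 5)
Your proposal is correct and follows the same route the paper intends: the containment of spectra is read off directly from Theorem \ref{thm:2}(c) via the observation that $\lambda$ lies in the positive spectrum of $\Delta_k$ exactly when $b^k_\lambda\ge 1$, and the monotonicity of $\lambda_1^{(k)}$ follows by taking minima. Your additional remarks on the nonemptiness and discreteness of the positive spectrum are sensible bookkeeping that the paper leaves implicit.
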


We can show that the positive  spectra
of $\Delta_k$ coincide for certain values of $k$.

\begin{cor}
  The positive spectra of  $\Delta_{n-1}$, $\Delta_n$ and
  $\Delta_{n+1}$ coincide. In particular when $n=1$,  the positive
  spectra of all the Laplacians coincide. 
\end{cor}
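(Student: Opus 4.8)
The plan is to deduce this from the two preceding corollaries together with the $k\leftrightarrow 2n-k$ symmetry of the spectrum. First I would record that the positive spectra of $\Delta_{n-1}$ and $\Delta_{n+1}$ coincide: by Corollary \ref{cor:1}(d) we have $b^{n-1}_\lambda=b^{n+1}_\lambda$ for every $\lambda$, so $\lambda$ is an eigenvalue of $\Delta_{n-1}$ precisely when it is one of $\Delta_{n+1}$. (Equivalently, the Hodge star commutes with $\Delta$ and carries $\E^{n-1}$ isometrically onto $\E^{n+1}$, so these operators are isospectral even with multiplicities.)

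Next, the corollary asserting that the positive spectrum of $\Delta_k$ is contained in that of $\Delta_{k+1}$ for $k<n$, applied with $k=n-1$, gives one containment: the positive spectrum of $\Delta_{n-1}$ lies in the positive spectrum of $\Delta_n$. For the reverse containment I would invoke the corollary that the positive spectrum of $\Delta_n$ is contained in the union of the spectra of $\Delta_{n-1}$ and $\Delta_{n+1}$. A positive eigenvalue of $\Delta_n$ cannot equal the zero eigenvalue of either neighbour, so it lies in the positive spectrum of $\Delta_{n-1}$ or of $\Delta_{n+1}$, and by the first step these two sets agree. Hence the positive spectrum of $\Delta_n$ is contained in that of $\Delta_{n-1}$, and combining the two containments with the first step shows that all three positive spectra coincide.

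For the final assertion, when $n=1$ the only Laplacians are $\Delta_0$, $\Delta_1$, $\Delta_2$, which are exactly $\Delta_{n-1}$, $\Delta_n$, $\Delta_{n+1}$, so the claim is the special case already established.

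Since every ingredient is already in hand, I do not anticipate a real obstacle; the one point requiring care is the bookkeeping between \emph{spectrum} and \emph{positive spectrum} when applying the union corollary, to ensure that the zero eigenvalue of $\Delta_{n\pm 1}$ plays no role in the argument.
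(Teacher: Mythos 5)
Your proof is correct and follows essentially the same route as the paper: the paper deduces the result directly from the inequalities $b_\lambda^{n-1}=b_\lambda^{n+1}$ and $b_\lambda^{n-1}\le b_\lambda^{n}\le b_\lambda^{n-1}+b_\lambda^{n+1}$, which are exactly the facts underlying the two corollaries you invoke. Your care about distinguishing the spectrum from the positive spectrum when applying the union corollary is well placed and handled correctly.
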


\begin{proof}
If $\lambda>0$, then the inequalities
$$b_\lambda^{n-1}=b_\lambda^{n+1}$$
$$b_\lambda^{n-1}\le b_\lambda^{n}\le b_\lambda^{n-1}+b_\lambda^{n+1}=2b_\lambda^{n-1}$$
follow from theorems  \ref{thm:1} and \ref{thm:2}. These imply the corollary.

\end{proof}

The spectra are difficult to calculate in general, although there is
at least one case where it is straight forward.

\begin{ex}
Let  $L\subset \C^n$ be a lattice
with dual lattice $L^*$ with respect to the Euclidean inner product. The spectrum of each
  $\Delta_k$ on the flat torus $\C^n/L$ 
is easily calculated to be the same set $\{4\pi^2 ||v||^2\mid v\in L^*\}$,
cf. \cite[pp 146-148]{berger}.
\end{ex}

\end{document}